\newtheorem{theorem}{Theorem}
\newtheorem{definition}[theorem]{Definition}
\newtheorem{example}[theorem]{Example}
\newtheorem{lemma}[theorem]{Lemma}
\newtheorem{proposition}[theorem]{Proposition}
\newenvironment{proof}[1][Proof]{\noindent \textbf{#1.} }{\  \rule{0.5em}{0.5em}}
\begin{document}

\title{PSEUDO-RIEMANNIAN SUBMANIFOLDS WITH $3$-PLANAR GEODESICS }
\author{Kadri ARSLAN, Bet\"{u}l BULCA and G\"{u}nay \"{O}ZT\"{U}RK }
\maketitle

\begin{abstract}
In the present paper we study pseudo-Riemannian submanifolds which have
3-planar geodesic normal sections.We consider W-curves (helices) on
pseudo-Riemannian submanifolds. Finally, we give neccessary and sufficient
condition for a normal section to be a W-curve on pseudo-Riemannian
submanifolds.
\end{abstract}

\section{\textbf{Introduction}}

\footnote{%
2000 \textit{Mathematics Subject Classification}. 53C40, 53C42
\par
\textit{Key words and phrases}: Pseudo-Riemannian submanifold, geodesic
normal section.} In a Riemannian manifold, a regular curve is called a helix
if its first and second curvatures is constant and the third curvature is
zero. In 1980 Ikawa investigated the condition that every helix in a
Riemannian submanifold is a helix in the ambient space \cite{Ik1}. In a
pseudo-Riemannian manifold, helices are defined by almost the same way as
the Riemannian case. The same author also characterized the helices in
Lorentzian submanifold \cite{Ik2}.

An isometric immersion $f:M_{r}^{n}\rightarrow \mathbb{R}_{s}^{N}$ is said
to be planar geodesic if the image of each geodesic of $M_{r}$ lies in a $2$%
-plane of $\mathbb{R}_{s}^{N}.$ In the Riemannian case such immersions were
studied and classified by Hong \cite{Ho}, Little \cite{Li}, Sakamoto \cite%
{Sa}, Ferus \cite{Fe} and others. Further, Blomstrom classified planar
geodesic immersions with indefinite metric \cite{Bl}. It has been shown that
all parallel, planar geodesic surfaces in $\mathbb{R}_{s}^{N}$ are the
pseudo-Riemannian spheres, the Veronese surfaces and certain flat quadratic
surfaces. Recently Kim studied minimal surfaces of pseudo-Euclidean spaces
with geodesic normal sections. He proved that complete connected minimal
surfaces in a $5$-dimensional pseudo-Euclidean space with geodesic normal
sections are totally geodesics or flat quadrics \cite{Ki1}.

In the present work, we give some results toward a characterization of $3$%
-planar geodesic immersions $f:M_{r}\rightarrow \mathbb{N}_{s}$ from an $n$%
-dimensional, connected pseudo-Riemannian manifold $M_{r}$ into $m$%
-dimensional pseudo-Riemannian manifold $\mathbb{N}_{s}.$ Further, We
consider $W$-curves (helices) on pseudo-Riemannian submanifolds. Finally, we
give necessary and sufficient condition for a normal section to be a $W$%
-curve on pseudo-Riemannian submanifolds.

\section{\textbf{Basic Concepts}}

Let $f:M_{r}\rightarrow \mathbb{N}_{s}$ be an isometric immersion from an $n$%
-dimensional, connected pseudo-Riemannian manifold $M_{r}$ of index $r$ $%
(0\leq r\leq n)$ into $m$-dimensional pseudo-Riemannian manifold $\mathbb{N}%
_{s}$ of index $s$. Let $\nabla $ and $\widetilde{\nabla }$ denote the
covariant derivatives of $M_{r}$ and $\mathbb{N}_{s}$ respectively. Thus $%
\widetilde{\nabla }_{X}$ is just the directional derivative in the direction 
$X$ in $\mathbb{N}_{s}.$ Then for tangent vector fields $X$, $Y$ the \textit{%
second fundamental form} $h$ of the immersion $f$ is defined by 
\begin{equation}
h(X,Y)=\overset{\sim }{\nabla }_{X}Y-\nabla _{X}Y.  \tag{1.1}  \label{A1}
\end{equation}

For a vector field $\xi $ normal to $M_{r}$ we put 
\begin{equation}
\widetilde{\nabla }_{X}\xi =-A_{\xi }X+D_{X}\xi ,  \tag{1.2}  \label{A2}
\end{equation}%
where $A_{\xi }$ is the shape operator of $M_{r}$ and $D$ is the normal
connection of $M_{r}$. We have the following relation 
\begin{equation}
<A_{\xi }X,Y>=<h(X,Y),\xi >\text{.}  \tag{1.3}  \label{A3}
\end{equation}

The covariant derivatives of $h$ denoted respectively by $\overline{\nabla }%
h $ and $\overline{\nabla }$ $\overline{\nabla }h$ to be;

\begin{equation}
(\overline{\nabla }_{X}h)(Y,Z)=D_{X}h(Y,Z)-h(\nabla _{X}Y,Z)-h(Y,\nabla
_{X}Z),  \tag{1.4}  \label{A4}
\end{equation}%
and

\begin{eqnarray}
(\overline{\nabla }_{W}\overline{\nabla }_{X}h)(Y,Z) &=&D_{W}((\overline{%
\nabla }_{X}h)(Y,Z))-(\overline{\nabla }_{\nabla _{W}X}h)(Y,Z)-  \TCItag{1.5}
\label{A5} \\
&&-(\overline{\nabla }_{X}h)(\nabla _{W}Y,Z)-(\overline{\nabla }%
_{X}h)(Y,\nabla _{W}Z),  \notag
\end{eqnarray}%
where $X,Y$, $Z$ and $W$ are tangent vector fields over $M_{r}$ and $%
\overline{\nabla }$ is the Vander Waerden-Bortolotti connection \cite{Ch1}.
Then we obtain the Codazzi equation 
\begin{equation}
(\overline{\nabla }_{X}h)(Y,Z)=(\overline{\nabla }_{Y}h)(X,Z)=(\overline{%
\nabla }_{Z}h)(X,Y)\text{.}  \tag{1.6}  \label{A6}
\end{equation}

It is a well-known property that $\overline{\nabla }h$ is a trilinear
symmetric form on $M_{r}^{\text{ }}$ with values in the normal bundle $%
N(M_{r})$ and it is called the \textit{third fundamental form}. If $%
\overline{\nabla }h=0,$ then the second fundamental form is said to be%
\textit{\ parallel }\cite{FS}\textit{\ (i.e. }$M$ \textit{is 1-parallel }%
\cite{ALMO}\textit{)}. If $\overline{\nabla }$ $\overline{\nabla }h=0,$ then
the third fundamental form is said to be\textit{\ parallel }\cite{Lu}\textit{%
\ (i.e. }$M$\textit{\ is 2-parallel }\cite{ALMO}\textit{).}

The mean curvature vector field $H$ of $M_{r}$ is defined by 
\begin{equation}
H=\frac{1}{n}\sum <e_{i},e_{i}>h(e_{i},e_{i}),i=1,...,n.  \tag{1.7}
\label{A7}
\end{equation}
where $\left \{ e_{1},e_{2},...,e_{n}\right \} $ is an orthonormal frame
field of $M_{r}.$ $H$ is said to be parallel when $DH=0$ holds.

If the second fundamental form $h$ satisfies 
\begin{equation}
g(X,Y)H=h(X,Y),  \tag{1.8}  \label{A8}
\end{equation}%
for any tangent vector fields $X,Y$ of $M_{r},$ then $M_{r}$ is called a
totally umbilical. A totally umbilical submanifold with parallel mean
curvature vector fields is said to be an e\textit{xtrinsic sphere }\cite{Nak}%
.

\section{\textbf{Helices in a Pseudo-Riemannian Manifold}}

\smallskip Let $\gamma $ be a regular curve in a pseudo-Riemannian manifold $%
M_{r}.$ We denote the tangent vector field $\gamma ^{\prime }(s)$ by the
letter $X,$ when $\left \langle X,X\right \rangle =+1$ or $-1,$ $\gamma $ is
called a \textit{unit speed curve}. The curve $\gamma $ is called a \textit{%
Frenet curve of osculating order }$d$ (See \cite{FS}) if its derivatives $%
\gamma ^{^{\prime }}(s),\gamma ^{^{\prime \prime }}(s),...,\gamma ^{(d)}(s)$
are linearly independent and $\gamma ^{^{\prime }}(s),\gamma ^{^{\prime
\prime }}(s),...,\gamma ^{(d+1)}(s)$ are no longer linearly independent for
all $s\in I.$

To each Frenet curve of order $d$ we can associate an orthonormal $d$ frame $%
\left \{ V_{1},V_{2},...,V_{d}\right \} $ along $\gamma ,$called the \textit{%
Frenet frame}, and $k_{1},k_{2},...,k_{d-1}$ are \textit{curvature functions}
of $\gamma $.

\begin{proposition}
\cite{Mu}. If $\gamma :I\longrightarrow M_{r}$ is a non-null differentiable
curve of an $n$-dimensional pseudo-Riemannian manifold $M_{r}$ of osculating
order $d$ $(0\leq d\leq n)$ and $\left \{ V_{1}=X,V_{2},...,V_{d}\right \} $
is the Frenet frame of $\gamma $ then 
\begin{equation}
V_{1}^{^{\prime }}=\nabla _{X}X=\varepsilon _{2}k_{1}V_{2},  \tag{2.1}
\label{B1}
\end{equation}%
\begin{equation}
V_{2}^{^{\prime }}=\nabla _{X}V_{2}=-\varepsilon _{1}k_{1}V_{1}+\varepsilon
_{3}k_{2}V_{3},  \tag{2.2}  \label{B2}
\end{equation}%
\begin{equation*}
\vdots
\end{equation*}%
\begin{equation}
V_{d-1}^{^{\prime }}=\nabla _{X}V_{d-1}=-\varepsilon
_{(d-2)}k_{(d-2)}V_{(d-2)}+\varepsilon _{d}k_{(d-1)}V_{d},  \tag{2.3}
\label{B3}
\end{equation}%
\begin{equation}
V_{d}^{^{\prime }}=\nabla _{X}V_{d}=-\varepsilon _{(d-1)}k_{(d-1)}V_{(d-1)},
\tag{2.4}  \label{B4}
\end{equation}%
where $\varepsilon _{i}=\left \langle V_{i},V_{i}\right \rangle =\pm 1,\
k_{i}$, $1\leq i\leq (d-1)$ are curvature functions of $\gamma .$
\end{proposition}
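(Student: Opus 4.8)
The plan is to construct the Frenet frame explicitly by covariant Gram--Schmidt orthonormalization of the iterated derivatives of $\gamma$, and then to read the formulas (2.1)--(2.4) off the band structure of the resulting connection matrix. Set $V_{1}=X=\gamma^{\prime}(s)$, which is non-null by hypothesis, so $\varepsilon_{1}=\langle V_{1},V_{1}\rangle=\pm 1$. Proceeding by induction, suppose orthonormal fields $V_{1},\dots,V_{j}$ have been produced along $\gamma$ with $\mathrm{span}\{V_{1},\dots,V_{j}\}=\mathrm{span}\{\gamma^{\prime},\gamma^{\prime\prime},\dots,\gamma^{(j)}\}$, where $\gamma^{(k)}$ denotes the $k$-th iterated covariant derivative $\nabla_{X}^{\,k-1}X$. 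I then form
\[
w_{j+1}=\gamma^{(j+1)}-\sum_{i=1}^{j}\varepsilon_{i}\langle \gamma^{(j+1)},V_{i}\rangle V_{i},
\]
which is orthogonal to each $V_{i}$, $i\le j$ (using $\varepsilon_{i}^{2}=1$); since $\gamma$ has osculating order $d$ this vector is nonzero for $j+1\le d$, and normalizing it gives a unit field $V_{j+1}$ with $\varepsilon_{j+1}=\langle V_{j+1},V_{j+1}\rangle=\pm 1$. This yields the Frenet frame $\{V_{1},\dots,V_{d}\}$ with $\mathrm{span}\{V_{1},\dots,V_{k}\}=\mathrm{span}\{\gamma^{\prime},\dots,\gamma^{(k)}\}$ for all $k\le d$.

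Next, write $\nabla_{X}V_{i}=\sum_{k=1}^{d}c_{ik}V_{k}$, so that $c_{ik}=\varepsilon_{k}\langle \nabla_{X}V_{i},V_{k}\rangle$. Two facts determine this matrix. First, since $V_{i}$ is a function-coefficient combination of $\gamma^{\prime},\dots,\gamma^{(i)}$, its covariant derivative lies in $\mathrm{span}\{\gamma^{\prime},\dots,\gamma^{(i+1)}\}=\mathrm{span}\{V_{1},\dots,V_{i+1}\}$; hence $c_{ik}=0$ for $k\ge i+2$, and for $i=d$ we moreover have $c_{dk}=0$ for $k\ge d+1$ because $\gamma^{(d+1)}$ is dependent on $\gamma^{\prime},\dots,\gamma^{(d)}$ — this is exactly why the upper term is absent in (2.4). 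Second, differentiating $\langle V_{i},V_{j}\rangle=\varepsilon_{i}\delta_{ij}$ along $\gamma$ gives $\langle \nabla_{X}V_{i},V_{j}\rangle+\langle V_{i},\nabla_{X}V_{j}\rangle=0$, i.e. $c_{ij}\varepsilon_{j}+c_{ji}\varepsilon_{i}=0$. Setting $i=j$ gives $c_{ii}=0$, and combined with the first fact this also forces $c_{ij}=0$ for $j\le i-2$. Thus $\nabla_{X}V_{i}=c_{i,i-1}V_{i-1}+c_{i,i+1}V_{i+1}$ is tridiagonal.

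Finally, I would define the curvature functions by $k_{i}:=\varepsilon_{i+1}c_{i,i+1}=\langle \nabla_{X}V_{i},V_{i+1}\rangle$ for $1\le i\le d-1$, so that $c_{i,i+1}=\varepsilon_{i+1}k_{i}$, and then the antisymmetry relation $c_{i,i+1}\varepsilon_{i+1}+c_{i+1,i}\varepsilon_{i}=0$ yields $c_{i+1,i}=-\varepsilon_{i}k_{i}$. Substituting these into the tridiagonal expression for $\nabla_{X}V_{i}$ reproduces precisely (2.1)--(2.4). The one genuine obstacle, and the only point where the argument diverges from the Riemannian case, is the non-nullity used to normalize the vectors $w_{j+1}$: with an indefinite metric these Gram--Schmidt vectors need not be non-null, so the existence of the Frenet frame is not automatic and is taken as part of the hypothesis that $\gamma$ is a non-null Frenet curve of osculating order $d$. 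Once that is granted, the remainder is the routine bookkeeping described above.
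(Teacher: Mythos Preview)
Your argument is correct and is the standard derivation of the pseudo-Riemannian Frenet equations: Gram--Schmidt on the iterated covariant derivatives, followed by the observation that the resulting connection matrix is tridiagonal because of the nested span condition and the differentiated orthonormality relations. The identification $k_{i}=\langle \nabla_{X}V_{i},V_{i+1}\rangle$ and the sign bookkeeping with the $\varepsilon_{i}$'s are exactly right, and you correctly flag the one genuine issue in the indefinite case, namely that each $w_{j+1}$ must be non-null for the normalization to make sense.

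There is, however, nothing to compare against: the paper does not supply a proof of this proposition at all. It is stated with a citation to \cite{Mu} (Murathan's thesis) and used as background. So your write-up is not an alternative to the paper's argument but rather a self-contained justification of a result the authors take for granted. If anything, your version is more informative than what appears in the paper, since it makes explicit both the construction of the frame and the reason the last equation (2.4) lacks an upper term.
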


\begin{definition}
Let $\gamma $ be a smooth curve of osculating order $d$ on $M_{r}.$ The
curve $\gamma $ is called a $\mathit{W}$\textit{-curve (or a helix)} of rank 
$d$ if $k_{1},k_{2},...,k_{d-1}$ are constant and $k_{d}=0.$ In particular,
a $W$-curve of rank $2$ is called a \textit{geodesics circle. A }$W$-curve
of rank $3$ is a \textit{right circular helix} \cite{FS}.
\end{definition}

\begin{proposition}
Let $\gamma $ be a non-null $W$-curve in $M_{r}$. If $\gamma $ is of rank $2$
then $\gamma ^{^{\prime \prime \prime }}$ is a scalar multiple of $\gamma
^{^{\prime }}.$ In this case necessarily%
\begin{equation}
\gamma ^{^{\prime \prime \prime }}(s)=-\varepsilon _{1}\varepsilon
_{2}k_{1}^{2}\gamma ^{^{\prime }}(s).  \tag{2.5}  \label{B5a}
\end{equation}
\end{proposition}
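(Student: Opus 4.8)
The plan is to obtain the identity (2.5) directly from the Frenet formulas (2.1)--(2.4) specialized to osculating order $d=2$, using only the defining property of a $W$-curve. Since $\gamma$ is a non-null $W$-curve of rank $2$, by the preceding definition its first curvature $k_{1}$ is constant and $k_{2}=0$; hence the Frenet frame consists of $\{V_{1}=X,\,V_{2}\}$ and the system collapses to the two relations $\nabla_{X}X=\varepsilon_{2}k_{1}V_{2}$ and $\nabla_{X}V_{2}=-\varepsilon_{1}k_{1}V_{1}$.

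Next I would identify the higher derivatives of the curve with iterated covariant derivatives along $\gamma$, namely $\gamma^{\prime\prime}=\nabla_{X}X$ and $\gamma^{\prime\prime\prime}=\nabla_{X}\nabla_{X}X$. The first Frenet relation gives $\gamma^{\prime\prime}=\varepsilon_{2}k_{1}V_{2}$. Differentiating once more and using that $k_{1}$ is constant, so that the scalar passes through $\nabla_{X}$, yields $\gamma^{\prime\prime\prime}=\varepsilon_{2}k_{1}\,\nabla_{X}V_{2}$. Substituting the second Frenet relation $\nabla_{X}V_{2}=-\varepsilon_{1}k_{1}V_{1}$ then gives $\gamma^{\prime\prime\prime}=-\varepsilon_{1}\varepsilon_{2}k_{1}^{2}V_{1}=-\varepsilon_{1}\varepsilon_{2}k_{1}^{2}\gamma^{\prime}$, which is precisely (2.5); in particular $\gamma^{\prime\prime\prime}$ is a scalar multiple of $\gamma^{\prime}$.

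This is essentially a one-line computation, so I do not expect a genuine obstacle. The only point requiring a little care is that the constancy of $k_{1}$ is used in an essential way: if $k_{1}$ were merely a smooth function of $s$, then differentiating $\varepsilon_{2}k_{1}V_{2}$ would produce an extra term $\varepsilon_{2}k_{1}^{\prime}V_{2}$, so $\gamma^{\prime\prime\prime}$ would in general acquire a nonzero $V_{2}$-component and the conclusion would fail. One should likewise keep in mind that throughout the argument $\,^{\prime}\,$ denotes the covariant derivative $\nabla_{X}$ along $\gamma$, as in the Frenet formulas above, not an ambient or coordinate derivative.
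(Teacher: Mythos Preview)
Your argument is correct and is essentially the same computation as the paper's proof. The only cosmetic difference is that the paper first writes the general expression $\gamma'''=-\varepsilon_{1}\varepsilon_{2}k_{1}^{2}X+\varepsilon_{2}k_{1}'V_{2}+\varepsilon_{2}\varepsilon_{3}k_{1}k_{2}V_{3}$ and then kills the last two terms using $k_{1}'=0$ and $k_{2}=0$, whereas you impose the rank-$2$ hypothesis before differentiating; the substance is identical.
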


\begin{proof}
By the use of (2.1) we have $\gamma ^{^{\prime \prime }}(s)=\varepsilon
_{2}k_{1}V_{2}(s).$ Furthermore, differentiating this equation with respect
to $s$ and using (2.2) we obtain 
\begin{equation}
\gamma ^{^{\prime \prime \prime }}(s)=-\varepsilon _{1}\varepsilon
_{2}k_{1}^{2}X+\varepsilon _{2}k_{1}^{^{\prime }}V_{2}(s)+\varepsilon
_{2}\varepsilon _{3}k_{1}k_{2}V_{3}(s).  \tag{2.6}  \label{B8}
\end{equation}%
Since $\gamma $ is a W-curve of rank $2$ then by definition $k_{1}$ is
constant and $k_{2}=0$ we get the result.
\end{proof}

\begin{proposition}
Let $\gamma $ be a non-null $W$-curve of $M_{r}$. If $\gamma $ is of
osculating order $3$ then 
\begin{equation}
\gamma ^{^{\prime \prime \prime \prime }}(s)=-\varepsilon _{2}(\varepsilon
_{1}k_{1}^{2}+\varepsilon _{3}k_{2}^{2})\gamma ^{^{\prime \prime }}(s). 
\tag{2.7}  \label{B9}
\end{equation}
\end{proposition}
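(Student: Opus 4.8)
The plan is to mimic exactly the computation carried out in the proof of Proposition~2.5 (the rank~$2$ case), only pushing the differentiation one order further and invoking the Frenet equations (2.1)--(2.4) with $d=3$. The starting point is the observation already used above, namely $\gamma''(s)=\varepsilon_2 k_1 V_2(s)$, together with formula (2.6), which for a curve of osculating order $3$ reads $\gamma'''(s)=-\varepsilon_1\varepsilon_2 k_1^2 V_1+\varepsilon_2 k_1' V_2+\varepsilon_2\varepsilon_3 k_1 k_2 V_3$. Since $\gamma$ is assumed to be a $W$-curve of rank $3$, Definition~2.2 gives that $k_1$ and $k_2$ are constant (so $k_1'=0$) and that $k_3=0$; hence $\gamma'''(s)=-\varepsilon_1\varepsilon_2 k_1^2 V_1+\varepsilon_2\varepsilon_3 k_1 k_2 V_3$.

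Next I would differentiate this expression once more along $\gamma$. The two surviving terms are handled by the first and the last Frenet equations for $d=3$: equation (2.1) gives $V_1'=\varepsilon_2 k_1 V_2$, while equation (2.4) gives $V_3'=-\varepsilon_2 k_2 V_2$ (here $V_d=V_3$ and $V_{d-1}=V_2$). Substituting these and using $\varepsilon_2^2=1$, all contributions collapse onto the single vector $V_2$, yielding $\gamma''''(s)=-\bigl(\varepsilon_1 k_1^3+\varepsilon_3 k_1 k_2^2\bigr)V_2(s)=-k_1\bigl(\varepsilon_1 k_1^2+\varepsilon_3 k_2^2\bigr)V_2(s)$.

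Finally I would re-express $V_2$ in terms of $\gamma''$: from $\gamma''=\varepsilon_2 k_1 V_2$ and $\varepsilon_2=\pm1$ we get $V_2=(\varepsilon_2/k_1)\gamma''$ (legitimate since $k_1\neq0$ because the osculating order is $3$), and substituting this back produces exactly $\gamma''''(s)=-\varepsilon_2(\varepsilon_1 k_1^2+\varepsilon_3 k_2^2)\gamma''(s)$, which is the claimed identity (2.7).

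There is no serious obstacle here; the argument is a direct three-line computation of the same flavour as Proposition~2.5. The only points that require a little care are the bookkeeping of the signs $\varepsilon_i=\langle V_i,V_i\rangle=\pm1$ — in particular keeping track of which $\varepsilon$ appears in $V_3'$ when one specializes (2.4) to $d=3$ — and the observation that the two intermediate terms $V_1'$ and $V_3'$ are both proportional to $V_2$, which is precisely what makes the resulting vector collinear with $\gamma''$.
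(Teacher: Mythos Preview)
Your argument is correct and is exactly the approach of the paper: differentiate (2.6), use that $k_1,k_2$ are constant and $k_3=0$, and apply the Frenet equations (2.1) and (2.4) for $d=3$. The paper's proof merely sketches this in one line, while you have written out the bookkeeping in full.
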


\begin{proof}
Differentiating (2.6) and using the fact that $k_{1},k_{2}$ are constant and 
$k_{3}=0$ we get the result.
\end{proof}

\section{\textbf{Planar Geodesic Immersions}}

Let $f:M_{r}\rightarrow \mathbb{N}_{s}$ be an isometric immersion from an $n$%
-dimensional, connected pseudo-Riemannian manifold $M_{r}$ of index $r$ $%
(0\leq r\leq n)$ into $m$-dimensional pseudo-Riemannian manifold $\mathbb{N}%
_{s} $ of index $s.$ For a point $p\in M_{r}$ and a unit vector $X\in
T_{p}(M_{r}) $ the vector $X$ and the normal space $T_{p}^{\bot }(M_{r})$
determine a $(m-n+1)$-dimensional subspace $E(p,X)$ of $T_{f(p)}(N_{s})$
which determines a $(m-n+1)$-dimensional totally geodesic submanifold $W$ of 
$\mathbb{N}_{s}$. The intersection of $M_{r}$ with $W$ gives rise a curve $%
\gamma $ (in a neighborhood of $p$) called the \textit{normal section} of $%
M_{r}$ at point $p$ in the direction of $X$ \cite{Ch2}.

The submanifold $M_{r}$ (or the isometric immersion $f$) is said to have $d$%
\textit{-planar normal sections} if for each normal section $\gamma $ the
first, second and higher order derivatives $\gamma ^{^{\prime }}(s),\gamma
^{^{\prime \prime }}(s),...,\gamma ^{(d)}(s),\gamma ^{(d+1)}(s)$ ;$(1\leq
d\leq m-n+1)$ are linearly dependent as vectors in $W$\cite{Ch2}.

The submanifold $M_{r}$ is said to have\textit{\ }$d$\textit{-planar
geodesic normal sections} if each normal section of $M_{r}$ is a geodesic of 
$M_{r}.$

In \cite{Bl} the immersion in pseudo-Euclidean space with $2$-planar
geodesic normal section have been studied by Blomstrom(See also \cite{Ho}).

We have the following result.

\begin{proposition}
Let $\gamma $ be a non-null geodesic normal section of $M_{r}.$ If $\gamma
^{\prime }(s)=X(s)$, then we have 
\begin{equation}
\gamma ^{^{\prime \prime }}(s)=h(X,X),  \tag{3.1}  \label{C1}
\end{equation}%
\begin{equation}
\gamma ^{^{^{\prime \prime \prime }}}(s)=-A_{h(X,X)}X+(\overline{\nabla }%
_{X}h)(X,X),  \tag{3.2}  \label{C.2}
\end{equation}%
\begin{equation}
\begin{array}{c}
\gamma ^{^{\prime \prime \prime \prime }}(s)=-\nabla
_{X}(A_{h(X,X)}X)-h(A_{h(X,X)}X,X) \\ 
\text{ \  \  \  \  \  \  \  \  \  \  \  \  \ }-A_{(\overline{\nabla }_{X}h)(X,X)}X+(%
\overline{\nabla }_{X}\overline{\nabla }_{X}h)(X,X).%
\end{array}
\tag{3.3}  \label{C.3}
\end{equation}
\end{proposition}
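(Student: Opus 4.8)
The plan is to differentiate the expressions for the successive derivatives of $\gamma$ one at a time, using the Gauss formula \eqref{A1} and the Weingarten formula \eqref{A2} together with the fact that $\gamma$ is a geodesic, so $\nabla_X X = 0$ throughout. Since $\gamma$ is a geodesic normal section with unit tangent $\gamma'(s) = X(s)$, the second fundamental form and shape operator are all evaluated along the curve, and each differentiation $\tfrac{d}{ds} = \widetilde{\nabla}_X$ splits a vector field along $\gamma$ into its tangential and normal parts.

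First I would establish \eqref{C1}: writing $\gamma''(s) = \widetilde{\nabla}_X \gamma'(s) = \widetilde{\nabla}_X X = \nabla_X X + h(X,X)$ by \eqref{A1}, and then dropping $\nabla_X X = 0$ because $\gamma$ is a geodesic, leaves $\gamma''(s) = h(X,X)$. Next, for \eqref{C.2}, I would differentiate $\gamma'' = h(X,X)$ again: since $h(X,X)$ is a normal vector field along $\gamma$, applying \eqref{A2} with $\xi = h(X,X)$ gives $\widetilde{\nabla}_X h(X,X) = -A_{h(X,X)}X + D_X(h(X,X))$. The point is then to recognize that $D_X(h(X,X)) = (\overline{\nabla}_X h)(X,X) + h(\nabla_X X, X) + h(X, \nabla_X X)$ by the definition \eqref{A4} of $\overline{\nabla}h$, and again the last two terms vanish by the geodesic condition, yielding \eqref{C.2}.

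For the last identity \eqref{C.3} I would differentiate \eqref{C.2} term by term. The term $-A_{h(X,X)}X$ is a tangential vector field, so $\widetilde{\nabla}_X(-A_{h(X,X)}X) = -\nabla_X(A_{h(X,X)}X) - h(A_{h(X,X)}X, X)$ again by \eqref{A1}. The term $(\overline{\nabla}_X h)(X,X)$ is normal, so by \eqref{A2} its derivative is $-A_{(\overline{\nabla}_X h)(X,X)}X + D_X\big((\overline{\nabla}_X h)(X,X)\big)$, and then the definition \eqref{A5} of the iterated covariant derivative, combined once more with $\nabla_X X = 0$, identifies $D_X\big((\overline{\nabla}_X h)(X,X)\big)$ with $(\overline{\nabla}_X \overline{\nabla}_X h)(X,X)$. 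Collecting the four surviving terms gives exactly \eqref{C.3}.

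The computation is essentially bookkeeping, and the main thing to be careful about is consistently discarding every occurrence of $\nabla_X X$ (and terms built from it) that appears when the defining formulas \eqref{A4} and \eqref{A5} are expanded — these are precisely the terms that a non-geodesic normal section would retain. A secondary subtlety is making sure that at each stage the vector field being differentiated is correctly decomposed into its tangential part (to which Gauss \eqref{A1} applies) and its normal part (to which Weingarten \eqref{A2} applies); the expression in \eqref{C.3} is already arranged so that $-\nabla_X(A_{h(X,X)}X)$ and $-A_{(\overline{\nabla}_X h)(X,X)}X$ are the tangential contributions while $-h(A_{h(X,X)}X,X)$ and $(\overline{\nabla}_X\overline{\nabla}_X h)(X,X)$ are the normal ones, so no further simplification is needed.
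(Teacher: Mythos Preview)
Your argument is correct and is exactly the natural computation: apply the Gauss formula \eqref{A1} and the Weingarten formula \eqref{A2} successively to $\gamma',\gamma'',\gamma'''$, and at each step use $\nabla_X X=0$ together with the defining relations \eqref{A4} and \eqref{A5} to rewrite $D_X h(X,X)$ and $D_X\big((\overline{\nabla}_X h)(X,X)\big)$ as $(\overline{\nabla}_X h)(X,X)$ and $(\overline{\nabla}_X\overline{\nabla}_X h)(X,X)$. The paper in fact states this proposition without proof (it passes directly to the example of pseudo-Riemannian spheres), so there is nothing to compare your approach against; your write-up supplies precisely the routine verification the authors omitted.
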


\begin{example}
\cite{Bl} Pseudo-Riemannian sphere%
\begin{equation}
S_{r}^{n}(c)=\left \{ p\in \mathbb{E}_{r}^{n+1}:<p-a,p-a>=\frac{1}{c}\right
\} ,c>0,  \tag{3.4}  \label{C4}
\end{equation}%
and pseudo-Riemannian hyperbolic space%
\begin{equation}
H_{r}^{n}(c)=\left \{ p\in \mathbb{E}_{r+1}^{n+1}:<p-a,p-a>=\frac{1}{c}%
\right \} ,c<0,  \tag{3.5}  \label{C5}
\end{equation}%
both have $2$-planar geodesic normal sections.
\end{example}

\begin{definition}
The submanifold $M_{r}$ (or the isometric immersion $f$) is said to be%
\textit{\ pseudo-isotropic} at $p$ if 
\begin{equation*}
L=<h(X,X),h(X,X)>,
\end{equation*}%
is independent of the choice of unit vector $X$ tangent to $M_{r}$ at $p$.
In particular if $L$ is independent of the points then $M_{r}$ is said to be
constant pseudo-isotropic.

The submanifold $M_{r}$ is pseudo-isotropic if and only if\textit{\ }%
\begin{equation*}
<h(X,X),h(X,Y)>=0,
\end{equation*}%
for any orthonormal vectors $X$ and $Y$ \cite{Bl}.
\end{definition}

The following results are well-known.

\begin{theorem}
\cite{Bl}. If the immersion $f:M_{r}^{2}\rightarrow \mathbb{E}_{s}^{m}$ has $%
2$-planar geodesic normal sections, then $f(M)$ is a submanifold with zero
mean curvature in a hypersphere $S_{s-1}^{m-1}$ or $H_{s-1}^{m-1}$ if and
only if $L$ is a non-zero constant.
\end{theorem}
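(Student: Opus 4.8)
The plan is to read off from the $2$-planar geodesic hypothesis the pointwise algebraic identities that control $h$, and then to manufacture the centre of the ambient hypersphere out of the mean curvature vector. First I would fix a point $p$ and a unit vector $X\in T_{p}M_{r}$ with $h(X,X)\neq 0$, and consider its normal section $\gamma $: being a geodesic, by (3.1) and (3.2) it satisfies $\gamma ^{\prime }=X$, $\gamma ^{\prime \prime }=h(X,X)$ (normal), $\gamma ^{\prime \prime \prime }=-A_{h(X,X)}X+(\overline{\nabla }_{X}h)(X,X)$, and $\{\gamma ^{\prime },\gamma ^{\prime \prime }\}$ is independent. The $2$-planar condition forces $\gamma ^{\prime \prime \prime }=\lambda \gamma ^{\prime }+\mu \gamma ^{\prime \prime }$; separating tangential and normal parts, and pairing the tangential one with $X$ via (1.3), yields $A_{h(X,X)}X=\varepsilon _{1}LX$ and $(\overline{\nabla }_{X}h)(X,X)=\mu \,h(X,X)$, where $\varepsilon _{1}=\langle X,X\rangle $ and $L=\langle h(X,X),h(X,X)\rangle $. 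Pairing the first identity with a tangent vector $Y\perp X$ and using (1.3) again gives $\langle h(X,X),h(X,Y)\rangle =0$, so $M_{r}$ is pseudo-isotropic and $L$ is a well-defined function on $M_{r}$.

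For ``$L$ a nonzero constant $\Rightarrow f(M)$ of zero mean curvature in a hypersphere'': differentiating $L$ along the geodesic in the direction $X$ (so $\nabla _{X}X=0$, hence $(\overline{\nabla }_{X}h)(X,X)=D_{X}h(X,X)$) gives $0=X(L)=2\mu L$, so $\mu =0$ because $L\neq 0$; thus $(\overline{\nabla }_{X}h)(X,X)=0$ for every unit $X$, and polarising the totally symmetric (by the Codazzi equation (1.6)), normal-bundle valued trilinear form $\overline{\nabla }h$ gives $\overline{\nabla }h=0$. Consequently $DH=0$ and $\langle H,H\rangle $ is constant. Now take an orthonormal frame $\{e_{1},e_{2}\}$, $\varepsilon _{i}=\langle e_{i},e_{i}\rangle $: pseudo-isotropy kills the off-diagonal entry $\langle h(e_{1},e_{2}),H\rangle $, while each diagonal entry $\langle h(e_{i},e_{i}),H\rangle $ produces the same eigenvalue $\tfrac12\big(L+\varepsilon _{1}\varepsilon _{2}\langle h(e_{1},e_{1}),h(e_{2},e_{2})\rangle \big)$, which equals $\langle H,H\rangle $ by the identity $4\langle H,H\rangle =2L+2\varepsilon _{1}\varepsilon _{2}\langle h(e_{1},e_{1}),h(e_{2},e_{2})\rangle $; hence $A_{H}=\langle H,H\rangle \,\mathrm{Id}$. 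Assuming $\langle H,H\rangle \neq 0$ and setting $\xi =H/\langle H,H\rangle $, we get $\widetilde{\nabla }_{X}\xi =(-A_{H}X+D_{X}H)/\langle H,H\rangle =-X$, so $\widetilde{\nabla }_{X}(p+\xi )=0$: the point $a=p+\xi $ is fixed and $\langle p-a,p-a\rangle =\langle \xi ,\xi \rangle =1/\langle H,H\rangle $ is a nonzero constant, so $f(M)$ lies in the hypersphere $S_{s-1}^{m-1}$ or $H_{s-1}^{m-1}$ (according to the sign of $\langle H,H\rangle $) of curvature $\langle H,H\rangle $ centred at $a$. Finally, splitting $h=h^{\prime }+\overline{h}$ with $\overline{h}(X,Y)=-\langle H,H\rangle \langle X,Y\rangle (p-a)$ and tracing yields $H=H^{\prime }+H$, so $H^{\prime }=0$.

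For the converse, let $f(M)$ have zero mean curvature in such a hypersphere with centre $a$ and ``radius'' $c^{-1}$, and put $N=p-a$. Then $\widetilde{\nabla }_{X}N=X$ gives $A_{N}=-\mathrm{Id}$, $DN=0$, $\langle N,N\rangle =c^{-1}$, and $h=h^{\prime }-c\langle \cdot ,\cdot \rangle N$ with $H=-cN$, so $\langle H,H\rangle =c$ is a nonzero constant. Feeding this into $(\overline{\nabla }_{X}h)(X,X)=\mu \,h(X,X)$ and comparing the components along $N$ (the $N$-component of the left side vanishes, since $(\overline{\nabla }_{X}h^{\prime })(X,X)\perp N$ and $\overline{\nabla }\overline{h}=0$ from $DN=0$, whereas the $N$-component of the right side is $-\mu c\langle X,X\rangle N$) forces $\mu =0$, hence $\overline{\nabla }h=0$ as before; then $\Vert h\Vert ^{2}$ (the full contraction of $h$ with itself) is constant, and combining $\Vert h\Vert ^{2}=2L+2\varepsilon _{1}\varepsilon _{2}\langle h(e_{1},e_{2}),h(e_{1},e_{2})\rangle $, $4c=2L+2\varepsilon _{1}\varepsilon _{2}\langle h(e_{1},e_{1}),h(e_{2},e_{2})\rangle $ and the isotropy relation linking $\langle h(e_{1},e_{1}),h(e_{2},e_{2})\rangle $, $\langle h(e_{1},e_{2}),h(e_{1},e_{2})\rangle $ and $L$ (got by evaluating $L$ on the unit directions $\cos \theta \,e_{1}+\sin \theta \,e_{2}$, or their hyperbolic analogues when the tangent plane is Lorentzian) gives a relation $\alpha L=\beta c+\delta \Vert h\Vert ^{2}$ with $\alpha \neq 0$; so $L$ is constant.

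The step I expect to be the real obstacle is the ``nonzero'' clause, i.e.\ excluding $\langle H,H\rangle =0$ (equivalently $L=0$): when the mean curvature vector is null or vanishes the centre $a$ cannot be formed, and a surface minimal in $\mathbb{E}_{s}^{m}$ with planar geodesics need not lie in any hypersphere. I would dispose of it geometrically: once $\mu =0$ and $L$ is a nonzero constant, each normal section is a genuine non-null circle, $\gamma ^{\prime \prime \prime }=-\varepsilon _{1}L\gamma ^{\prime }$ with $\varepsilon _{1}L\neq 0$, of centre $p+\tfrac{\varepsilon _{1}}{L}h(X,X)$, and, using $A_{H}=\langle H,H\rangle \,\mathrm{Id}$, one checks that $p+H/\langle H,H\rangle $ minus this centre is orthogonal to the $2$-plane of the circle; this puts all these circles on one common non-degenerate hypersphere, which is incompatible with $\langle H,H\rangle =0$. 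The rest is pseudo-Riemannian bookkeeping: the signs $\varepsilon _{i}$, degenerate (null) tangent directions, and the two classes of unit vectors with their ``rotations''/boosts in mixed signature, which flip the sign of the coefficient of $L$ in the isotropy relation above.
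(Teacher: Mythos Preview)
The paper does not prove this theorem: it is quoted verbatim from Blomstrom \cite{Bl} as one of several known results (Theorems 8--11) and carries no proof in the text. So there is no ``paper's own proof'' to compare your proposal against.

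That said, your strategy is essentially the standard one and is close in spirit to Blomstrom's original argument. From the $2$-planar geodesic hypothesis you correctly extract $A_{h(X,X)}X=\varepsilon _{1}LX$ and $(\overline{\nabla }_{X}h)(X,X)=\mu \,h(X,X)$, deduce pseudo-isotropy, and then, when $L$ is a nonzero constant, obtain $\mu =0$, hence $\overline{\nabla }h=0$ by polarisation of a totally symmetric form; the construction of the centre $a=p+H/\langle H,H\rangle$ via $A_{H}=\langle H,H\rangle \mathrm{Id}$ and $DH=0$ is exactly how one lands $f(M)$ on a pseudo-sphere, and tracing $h=h'+\overline h$ gives $H'=0$. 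For the converse your $N$-component trick to force $\mu =0$ is clean, and the algebra you sketch does close: in an orthonormal frame one gets $4\langle H,H\rangle =4L-4\varepsilon _{1}\varepsilon _{2}\langle h(e_{1},e_{2}),h(e_{1},e_{2})\rangle$ from isotropy, and combining with $\Vert h\Vert ^{2}=2L+2\varepsilon _{1}\varepsilon _{2}\langle h(e_{1},e_{2}),h(e_{1},e_{2})\rangle$ yields $4L=\Vert h\Vert ^{2}+2\langle H,H\rangle$, constant.

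The genuine soft spot you have already identified yourself: ruling out $\langle H,H\rangle =0$ when $L\neq 0$. Your geometric argument (all normal-section circles share a common centre) is the right idea, but as written it presupposes $\langle H,H\rangle \neq 0$ to form $p+H/\langle H,H\rangle$; you should instead argue directly from $A_{H}=\langle H,H\rangle \mathrm{Id}$ together with the isotropy relation above that $\langle H,H\rangle =0$ forces $L=\varepsilon _{1}\varepsilon _{2}\langle h(e_{1},e_{2}),h(e_{1},e_{2})\rangle$ and then use minimality ($h(e_{1},e_{1})=-\varepsilon _{1}\varepsilon _{2}h(e_{2},e_{2})$, so $L=\langle h(e_{1},e_{1}),h(e_{2},e_{2})\rangle\cdot(-\varepsilon _{1}\varepsilon _{2})$) together with the isotropy identity $\langle h(e_{1},e_{1}),h(e_{2},e_{2})\rangle +2\langle h(e_{1},e_{2}),h(e_{1},e_{2})\rangle =\varepsilon _{1}\varepsilon _{2}L$ to conclude $L=0$. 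The remaining work is, as you say, the indefinite-metric bookkeeping: handling null tangent directions and the two causal classes of unit vectors when polarising.
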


\begin{theorem}
\cite{Ki1}. The immersion $f:M_{r}^{2}\rightarrow \mathbb{E}_{s}^{m}$ with $%
2 $-planar geodesic normal sections is constant pseudo-isotropic.
\end{theorem}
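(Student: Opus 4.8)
The plan is to read off the two algebraic consequences of $2$-planarity from $(3.1)$--$(3.3)$, use the tangential one to obtain pseudo-isotropy, and then force the constancy of $L=\langle h(X,X),h(X,X)\rangle$ out of the normal one together with the Codazzi equation $(1.6)$. First I would fix $p\in M_r^{2}$ and a unit $X\in T_pM_r$ and let $\gamma$ be the geodesic normal section with $\gamma^{\prime}(0)=X$. Since $\mathbb{E}_s^m$ is flat, $(3.1)$--$(3.2)$ give $\gamma^{\prime\prime}(0)=h(X,X)$ (normal) and $\gamma^{\prime\prime\prime}(0)=-A_{h(X,X)}X+(\overline{\nabla}_Xh)(X,X)$, with tangential part $-A_{h(X,X)}X$ and normal part $(\overline{\nabla}_Xh)(X,X)$. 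Where $h(X,X)\neq0$ the vectors $\gamma^{\prime}(0),\gamma^{\prime\prime}(0)$ are independent, so the $2$-planar condition forces $\gamma^{\prime\prime\prime}(0)\in\operatorname{span}\{X,h(X,X)\}$; matching tangential and normal parts yields
\begin{equation*}
A_{h(X,X)}X=\lambda(X)\,X,\qquad(\overline{\nabla}_Xh)(X,X)=\mu(X)\,h(X,X)
\end{equation*}
for scalars $\lambda(X),\mu(X)$, on the dense set of unit directions with $h(X,X)\neq0$ (if $h$ already vanishes at $p$ there is nothing to prove there).

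For a unit $Y$ orthogonal to $X$, relation $(1.3)$ then gives $\langle h(X,X),h(X,Y)\rangle=\langle A_{h(X,X)}X,Y\rangle=\lambda(X)\langle X,Y\rangle=0$; this being a polynomial identity in $X$ that holds on a dense set, it holds for all unit $X$, which is exactly the criterion for pseudo-isotropy recalled after the definition, so $M_r$ is pseudo-isotropic. Hence $L$ depends on the point alone, and pairing the first identity with $X$ gives $A_{h(X,X)}X=\langle X,X\rangle\,L(p)\,X$. To promote this to \emph{constant} pseudo-isotropy I would differentiate $L$ along $\gamma$: since $\nabla_XX=0$, $D_Xh(X,X)=(\overline{\nabla}_Xh)(X,X)$, so
\begin{equation*}
(dL)_p(X)=\left.\tfrac{d}{ds}L(\gamma(s))\right|_{s=0}=2\langle(\overline{\nabla}_Xh)(X,X),h(X,X)\rangle=2\mu(X)\,L(p).
\end{equation*}
The left side is linear in $X$, so where $L\neq0$ one gets $\mu=\omega$ for a $1$-form $\omega$ and $dL=2L\,\omega$; this relation being a linear ODE along any path and $M_r$ connected, $L$ has no isolated zeros, so either $L\equiv0$ (and we are done) or $L$ is nowhere zero, in which case $\omega=\tfrac12\,d\log|L|$ is closed.

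It remains to show $\omega\equiv0$. I would polarize $(\overline{\nabla}_Xh)(X,X)=\omega(X)h(X,X)$, using the total symmetry of $\overline{\nabla}h$ from $(1.6)$, to get $3(\overline{\nabla}_Xh)(X,Y)=\omega(Y)h(X,X)+2\omega(X)h(X,Y)$, then trace this over an orthonormal frame parallel at $p$ and use $(1.6)$ again as $(\overline{\nabla}_{e_i}h)(e_i,Y)=(\overline{\nabla}_Yh)(e_i,e_i)$, obtaining $3D_YH=\omega(Y)H+h(\omega^{\sharp},Y)$ with $\omega^{\sharp}$ dual to $\omega$. Bringing in the fourth-order restriction from $(3.3)$ — $2$-planarity also forces $\gamma^{\prime\prime\prime\prime}(0)\in\operatorname{span}\{X,h(X,X)\}$, hence $(\overline{\nabla}_X\overline{\nabla}_Xh)(X,X)$ proportional to $h(X,X)$ — together with the covariant derivative of the normal identity along $\gamma$, I expect to squeeze out enough independent relations to force $\omega\equiv0$, whence $dL\equiv0$ and $L$ is constant on the connected $M_r$; thus $M_r$ is constant pseudo-isotropic. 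The routine part is everything up through pseudo-isotropy. The hard part will be this last step — excluding a nonzero factor $\omega$, equivalently showing that each geodesic normal section actually has constant curvature (so is a circle or a line) — for which the Codazzi equation and the higher-order conditions $(3.3)$ must genuinely be exploited. A secondary nuisance is the indefinite metric: null tangent directions where $h(X,X)=0$, and totally geodesic points where $h\equiv0$ (so $L=0$), have to be dispatched by continuity and connectedness, and the linear-dependence step must be carried out componentwise (tangential versus normal) rather than through a Frenet frame because of the signs $\varepsilon_i=\langle V_i,V_i\rangle$.
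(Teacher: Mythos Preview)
This theorem is not proved in the present paper at all: it is quoted from Kim \cite{Ki1} (and sits on top of Blomstrom's structure theory of planar geodesic immersions in pseudo-Euclidean space \cite{Bl}), with no argument supplied here. There is therefore no ``paper's own proof'' against which to compare your proposal.

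On the merits of your argument: the extraction of $A_{h(X,X)}X\parallel X$ and $(\overline{\nabla}_Xh)(X,X)\parallel h(X,X)$ from $2$-planarity via $(3.1)$--$(3.2)$, and the deduction of pointwise pseudo-isotropy through $(1.3)$, are correct and standard. The genuine gap is exactly the step you yourself label ``the hard part'': you never show $\omega\equiv 0$, you only announce an expectation that Codazzi together with the fourth-order condition from $(3.3)$ will yield enough relations. Since that step is the entire content of \emph{constant} (as opposed to pointwise) pseudo-isotropy, the proposal as it stands does not prove the theorem. Two subsidiary problems compound this. First, your dichotomy ``either $L\equiv 0$ or $L$ is nowhere zero'' invokes the linear ODE $dL=2L\omega$ globally, but you have only defined $\omega=\mu$ on the open set $\{L\neq 0\}$; using ODE uniqueness to control the zero set of $L$ then begs the question. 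Second, in the indefinite setting $L=\langle h(X,X),h(X,X)\rangle=0$ does not force $h(X,X)=0$, so your continuity remarks do not dispose of the null locus. The route taken in the cited sources is different in spirit: one shows first that a $2$-planar geodesic immersion into $\mathbb{E}_s^m$ has \emph{parallel} second fundamental form, $\overline{\nabla}h=0$ (this is where the surface hypothesis $n=2$ and the full polarization of Codazzi are genuinely used), after which constancy of $L$ drops out immediately from your own displayed identity $dL(X)=2\langle(\overline{\nabla}_Xh)(X,X),h(X,X)\rangle=0$. If you want to complete your approach, that is the missing lemma to aim for rather than trying to kill $\omega$ directly.
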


\begin{theorem}
\cite{Ki2}. Let $M_{r}$ be a pseudo-Riemannian submanifold of index $r$ of a
pseudo-Euclidean space $\mathbb{E}_{s}^{m}$ of index $s$ with geodesic
normal sections. Then 
\begin{equation}
\left \langle (\overline{\nabla }_{X}h)(X,X),(\overline{\nabla }%
_{X}h)(X,X)\right \rangle ,  \tag{3.6}  \label{C6}
\end{equation}%
is constant on the their tangent bundle $UM$ of $M_{r}.$
\end{theorem}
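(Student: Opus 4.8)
The plan is to work along a single geodesic normal section $\gamma$ of $M_{r}$ and to extract, order by order, the restrictions that the normal-section hypothesis places on $h$. Write $X=\gamma'(s)$ with $\langle X,X\rangle=\varepsilon_{1}=\pm1$, and recall from $(3.1)$--$(3.3)$ the expressions for $\gamma''=h(X,X)$, for $\gamma'''=-A_{h(X,X)}X+(\overline{\nabla}_{X}h)(X,X)$, and for $\gamma''''$ in terms of $h$, the shape operator and the covariant derivatives of $h$. The defining feature of a normal section is that near each of its points $\gamma(s_{0})$ the curve $\gamma$ lies in the affine subspace $\gamma(s_{0})+\bigl(\mathbb{R}\gamma'(s_{0})\oplus T^{\bot}_{\gamma(s_{0})}M_{r}\bigr)$; hence every $\gamma^{(k)}(s_{0})$ lies in $\mathbb{R}\gamma'(s_{0})\oplus T^{\bot}_{\gamma(s_{0})}M_{r}$, i.e. the component of $\gamma^{(k)}$ tangent to $M_{r}$ is a scalar multiple of $\gamma'$.

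First I would feed this into $(3.2)$: the tangential part of $\gamma'''$ is $-A_{h(X,X)}X$, so $A_{h(X,X)}X\parallel X$, and pairing with $X$ via $(1.3)$ gives $A_{h(X,X)}X=\varepsilon_{1}L\,X$ with $L:=\langle h(X,X),h(X,X)\rangle$. In particular $\langle h(X,X),h(X,Y)\rangle=0$ whenever $Y\perp X$, so $M_{r}$ is pseudo-isotropic and $L$ depends on the base point alone, while $\gamma'''=-\varepsilon_{1}L\,X+(\overline{\nabla}_{X}h)(X,X)$. Substituting $A_{h(X,X)}X=\varepsilon_{1}LX$ and $\nabla_{X}X=0$ into $(3.3)$, the tangential part of $\gamma''''$ becomes $-\varepsilon_{1}L'X-A_{(\overline{\nabla}_{X}h)(X,X)}X$, which forces $A_{(\overline{\nabla}_{X}h)(X,X)}X\parallel X$, hence $A_{(\overline{\nabla}_{X}h)(X,X)}X=\varepsilon_{1}\langle(\overline{\nabla}_{X}h)(X,X),h(X,X)\rangle X$; one more differentiation, applied to $\gamma^{(5)}$, gives the corresponding statement for $A_{(\overline{\nabla}_{X}\overline{\nabla}_{X}h)(X,X)}X$ and expresses $\langle(\overline{\nabla}_{X}\overline{\nabla}_{X}h)(X,X),h(X,X)\rangle$ through lower-order quantities.

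Next I would differentiate along $\gamma$. Because $\nabla_{X}X=0$ one has $\frac{d}{ds}L=2\langle(\overline{\nabla}_{X}h)(X,X),h(X,X)\rangle$ and $\frac{d}{ds}\langle(\overline{\nabla}_{X}h)(X,X),(\overline{\nabla}_{X}h)(X,X)\rangle=2\langle(\overline{\nabla}_{X}\overline{\nabla}_{X}h)(X,X),(\overline{\nabla}_{X}h)(X,X)\rangle$. Feeding in the shape-operator identities just obtained, the Codazzi equation $(1.6)$ and the Ricci-type commutation rule for $\overline{\nabla}\,\overline{\nabla}h$ (both of which simplify since $\mathbb{E}^{m}_{s}$ is flat), and the fact that $\gamma$ lies in an $(m-n+1)$-dimensional totally geodesic slice --- so its osculating order in the ambient space is finite and the otherwise unbounded chain of identities closes --- should give first $\frac{d}{ds}L=0$ and then $\frac{d}{ds}\langle(\overline{\nabla}_{X}h)(X,X),(\overline{\nabla}_{X}h)(X,X)\rangle=0$. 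Combined with pseudo-isotropy, with a point-wise polarization showing that $\langle(\overline{\nabla}_{X}h)(X,X),(\overline{\nabla}_{X}h)(X,X)\rangle=c(p)\langle X,X\rangle^{3}$ holds identically on each $T_{p}M_{r}$, and with connectedness of $M_{r}$, this yields that $\langle(\overline{\nabla}_{X}h)(X,X),(\overline{\nabla}_{X}h)(X,X)\rangle$ is constant on $UM$. Equivalently, since $\langle\gamma''',\gamma'''\rangle=\varepsilon_{1}L^{2}+\langle(\overline{\nabla}_{X}h)(X,X),(\overline{\nabla}_{X}h)(X,X)\rangle$, the assertion is that the first two Frenet curvatures of every geodesic normal section, regarded as a curve in $\mathbb{E}^{m}_{s}$, are constant.

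The routine part is the Gauss--Weingarten bookkeeping in $(3.1)$--$(3.3)$ and one order beyond. The main obstacle is the closure in the third step: each differentiation produces a normal term of one order higher, and one must check that the normal-section conditions available at orders $\le 5$, used together with the flatness equations and the finiteness of the osculating order, are exactly enough to cancel $\langle(\overline{\nabla}_{X}\overline{\nabla}_{X}h)(X,X),(\overline{\nabla}_{X}h)(X,X)\rangle$; one should not expect the analogous cancellation one order later, since not every geodesic normal section need be a $W$-curve.
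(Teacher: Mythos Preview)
The paper does not prove this theorem at all; it is quoted verbatim from Kim \cite{Ki2} as background material and no argument is supplied. So there is nothing in the paper to compare your proposal against.

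As to the proposal itself: the first two stages are solid and standard. Reading off the tangential parts of $\gamma'''$ and $\gamma''''$ via $(3.2)$--$(3.3)$ does yield $A_{h(X,X)}X=\varepsilon_{1}LX$ and $A_{(\overline{\nabla}_{X}h)(X,X)}X=\varepsilon_{1}\langle(\overline{\nabla}_{X}h)(X,X),h(X,X)\rangle X$, and polarizing the first gives pseudo-isotropy so that $L$ is pointwise well defined. The weakness is exactly where you flag it: the ``closure'' step. Writing out the tangential part of $\gamma''''$ gives only $-\tfrac{3}{2}\varepsilon_{1}(XL)X$, which is already parallel to $X$ and so imposes no new relation; in particular it does \emph{not} force $XL=0$. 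Likewise, the fact that $\gamma$ lies in an $(m-n+1)$-plane bounds the osculating order but does not by itself produce the cancellation you need for $\tfrac{d}{ds}\langle(\overline{\nabla}_{X}h)(X,X),(\overline{\nabla}_{X}h)(X,X)\rangle$. In Kim's argument the essential extra input is a systematic polarization of the identities $A_{h(X,X)}X\parallel X$ and $A_{(\overline{\nabla}_{X}h)(X,X)}X\parallel X$ (using Codazzi to symmetrize $(\overline{\nabla}_{X}h)(Y,Z)$), which yields enough orthogonality relations among $h$ and $\overline{\nabla}h$ to show first that $L$ is globally constant and then that $\langle(\overline{\nabla}_{X}h)(X,X),(\overline{\nabla}_{X}h)(X,X)\rangle$ depends only on $\langle X,X\rangle$ and is constant along geodesics. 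Your sketch gestures at this but does not carry it out; the phrase ``should give'' is precisely where a referee would ask for the computation.
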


\begin{theorem}
\cite{Ki2}. Let $M_{r}$ be a minimal surface of $\mathbb{E}_{s}^{5}$ with
geodesics normal sections. Then we have

$i)$\textbf{\ }$M_{r}$ is $1$-parallel and $0$-pseudo isotropic (i.e. $L=0$),

$ii)$\textbf{\ }$M_{r}$ has $2$-planar geodesic normal sections,

$iii)$\textbf{\ }$M_{r}$ is flat.
\end{theorem}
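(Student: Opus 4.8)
The plan is to use the fact that the codimension is $3$. For a surface in a $5$-dimensional ambient space the normal space $T_{p}^{\bot }M_{r}$ is $3$-dimensional, so $E(p,X)=\mathbb{R}X\oplus T_{p}^{\bot }M_{r}$ has dimension $m-n+1=4$ and every geodesic normal section $\gamma $ lies inside the flat $4$-plane $W=E(p,X)$ of $\mathbb{E}_{s}^{5}$. Writing $\gamma ^{\prime }=X$, the hypothesis that $\gamma $ is a geodesic means $\nabla _{X}X=0$, so the formulas (3.1)--(3.3) hold; since every derivative $\gamma ^{(k)}(p)$ must lie in the linear subspace parallel to $W$, and that subspace meets $T_{p}M_{r}$ in exactly $\mathbb{R}X$, the $T_{p}M_{r}$-component of each $\gamma ^{(k)}(p)$ is a multiple of $X$ (the normal components lie in $T_{p}^{\bot }M_{r}\subset W$ automatically and are unconstrained).

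I would first run this for $k=3$: by (3.2) the tangential part of $\gamma ^{\prime \prime \prime }(p)$ is $-A_{h(X,X)}X$, so $A_{h(X,X)}X\in \mathbb{R}X$; pairing with a unit $N\in T_{p}M_{r}$ orthogonal to $X$ and using (1.3) gives $\langle h(X,X),h(X,N)\rangle =0$ for every orthonormal pair $X,N$, i.e. $M_{r}$ is pseudo-isotropic at each point, with $A_{h(X,X)}X=\langle X,X\rangle L\,X$. For $k=4$, inserting this into (3.3) and using $\nabla _{X}X=0$ along $\gamma $, the tangential part of $\gamma ^{\prime \prime \prime \prime }(p)$ reduces modulo $\mathbb{R}X$ to $-A_{(\overline{\nabla }_{X}h)(X,X)}X$, whence $\langle (\overline{\nabla }_{X}h)(X,X),h(X,N)\rangle =0$. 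In an adapted orthonormal frame $\{e_{1},e_{2}\}$, minimality gives $h(e_{2},e_{2})=-\varepsilon _{1}\varepsilon _{2}h(e_{1},e_{1})$, so $X\mapsto h(X,X)$ and $X\mapsto (\overline{\nabla }_{X}h)(X,X)$ trace curves spanning, respectively, the first normal space $V_{1}=\mathrm{span}\{h(e_{1},e_{1}),h(e_{1},e_{2})\}$ and $V_{2}=\mathrm{span}\{(\overline{\nabla }_{e_{1}}h)(e_{1},e_{1}),(\overline{\nabla }_{e_{1}}h)(e_{1},e_{2})\}$, and expanding $\langle (\overline{\nabla }_{X}h)(X,X),h(X,N)\rangle =0$ in this frame yields $V_{1}\perp V_{2}$. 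Now $V_{1}+V_{2}$ is at most $3$-dimensional inside $T_{p}^{\bot }M_{r}$; pseudo-isotropy pins the inner products of the spanning vectors of $V_{1}$ down to $\pm L$, and the quoted constancy of $\langle (\overline{\nabla }_{X}h)(X,X),(\overline{\nabla }_{X}h)(X,X)\rangle $ on $UM_{r}$ makes the Gram matrix of the spanning vectors of $V_{2}$ scalar; a signature/dimension count then forces $\overline{\nabla }h=0$, which is the $1$-parallel part of $(i)$.

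With $\overline{\nabla }h=0$, differentiating $\langle h(X,X),h(X,X)\rangle $ along a normal section gives $XL=2\langle (\overline{\nabla }_{X}h)(X,X),h(X,X)\rangle =0$, so $L$ is constant, and $\gamma ^{\prime \prime \prime }=-A_{h(X,X)}X=-\langle X,X\rangle L\,\gamma ^{\prime }$, so $\gamma ^{\prime },\gamma ^{\prime \prime },\gamma ^{\prime \prime \prime }$ are linearly dependent: $M_{r}$ has $2$-planar geodesic normal sections, proving $(ii)$. If the constant $L$ were nonzero, the quoted theorem of Blomström would present $f(M_{r})$ as a submanifold of zero mean curvature in an extrinsic hypersphere $S_{s-1}^{4}$ or $H_{s-1}^{4}$ of $\mathbb{E}_{s}^{5}$; then the mean curvature vector of $M_{r}$ in $\mathbb{E}_{s}^{5}$ would equal the (nowhere-zero) mean curvature vector of that hypersphere restricted to $M_{r}$, contradicting the minimality of $M_{r}$ in $\mathbb{E}_{s}^{5}$. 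Hence $L=0$, completing $(i)$. Finally, pseudo-isotropy together with minimality give $\langle h(e_{1},e_{1}),h(e_{2},e_{2})\rangle =-\varepsilon _{1}\varepsilon _{2}L$ and $\langle h(e_{1},e_{2}),h(e_{1},e_{2})\rangle =\varepsilon _{1}\varepsilon _{2}L$, so the Gauss equation $\varepsilon _{1}\varepsilon _{2}K=\langle h(e_{1},e_{1}),h(e_{2},e_{2})\rangle -\langle h(e_{1},e_{2}),h(e_{1},e_{2})\rangle $ gives $K=-2L=0$, i.e. $M_{r}$ is flat, which is $(iii)$.

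The step I expect to be the main obstacle is the deduction $\overline{\nabla }h=0$ from the $k=3,4$ conditions. In the Riemannian case $V_{1}\perp V_{2}$ inside a $3$-dimensional positive-definite normal space immediately forces one of them to have dimension $\leq 1$, and the scalar Gram matrices collapse that one to $0$ (and, in the branch $\dim V_{1}\leq 1$, force $M_{r}$ totally geodesic). In the pseudo-Riemannian setting $V_{1}$ or $V_{2}$ can be null, so two orthogonal $2$-planes may coexist; one then has to work through the signature cases --- using that a scalar Gram matrix $L\,I$ is degenerate only when $L=0$, and if necessary the $k=5$ condition, which by the same mechanism makes $\mathrm{span}\{(\overline{\nabla }_{X}\overline{\nabla }_{X}h)(X,X)\}$ orthogonal to $V_{1}$ --- to rule out a surviving nonzero $\overline{\nabla }h$. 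Everything after $\overline{\nabla }h=0$ is routine.
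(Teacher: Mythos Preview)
The paper does not supply a proof of this theorem; it is quoted as a result of Kim \cite{Ki2} with no argument given, so there is nothing in the paper to compare your proposal against.

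Evaluating your proposal on its own merits: the overall architecture is correct. The extraction of $A_{h(X,X)}X\in\mathbb{R}X$ and $A_{(\overline{\nabla}_{X}h)(X,X)}X\in\mathbb{R}X$ from the tangential parts of (3.2) and (3.3), the deduction of pseudo-isotropy, and everything after $\overline{\nabla}h=0$ --- constancy of $L$, $2$-planarity via $\gamma^{\prime\prime\prime}=-\varepsilon_{1}L\,\gamma^{\prime}$, the Blomstrom contradiction forcing $L=0$ under minimality, and the Gauss-equation computation $K=-2L=0$ --- are all sound.

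The genuine gap is precisely where you place it: deducing $\overline{\nabla}h=0$ from $V_{1}\perp V_{2}$ inside a $3$-dimensional indefinite normal space. Two points need more than you have written. First, your claim that the Gram matrix of $\{(\overline{\nabla}_{e_{1}}h)(e_{1},e_{1}),(\overline{\nabla}_{e_{1}}h)(e_{1},e_{2})\}$ is scalar does not follow directly from the constancy of $\langle(\overline{\nabla}_{X}h)(X,X),(\overline{\nabla}_{X}h)(X,X)\rangle$ on $UM$; that constancy is a degree-$6$ condition whose polarization, together with Codazzi and minimality, must be worked out explicitly to control the off-diagonal entry. Second, in indefinite signature the case $L=0$ allows $V_{1}$ to be a totally null $2$-plane, and then a $2$-dimensional $V_{2}$ orthogonal to $V_{1}$ is not excluded by dimension alone (they can share the null direction); ruling this out requires a genuine signature argument or, as you suggest, the $k=5$ condition. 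Until that case analysis is written out, the proposal remains a correct plan with an honestly-flagged hole rather than a complete proof.
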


Submanifolds $M$ in $\mathbb{R}^{n+d}$ with $3$-planar normal sections have
been studied by S.J.Li for the case $M$ is isotropic \cite{Li1} and sphered 
\cite{Li2}. See also \cite{AW} for the case $M$ is a product manifold in $%
\mathbb{R}^{n+d}.$ In \cite{AC} the authors consider submanifolds in a real
space form $\mathbb{N}^{n+d}(c)$ with $3$-planar geodesic normal sections.

We proved the following results$.$

\begin{lemma}
Let $f:M_{r}\rightarrow \mathbb{N}_{s}$ be an isometric immersion with $3$%
-planar geodesic normal sections then $f$ is constant pseudo-isotropic.
\end{lemma}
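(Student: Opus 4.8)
The goal is to show that if $f:M_{r}\rightarrow \mathbb{N}_{s}$ has $3$-planar geodesic normal sections, then $L=\langle h(X,X),h(X,X)\rangle$ is independent of the unit vector $X$ (and, as stated, of the point). The starting point is Proposition~3.1, which gives the first four derivatives of a geodesic normal section $\gamma$ with $\gamma'(s)=X(s)$. By the $3$-planar hypothesis, the vectors $\gamma'(s),\gamma''(s),\gamma'''(s),\gamma''''(s)$ are linearly dependent in the $(m-n+1)$-plane $W=E(p,X)$; in particular, at $s=0$ they span a subspace of dimension at most $3$. I would first record the tangential/normal decompositions from (3.1)--(3.3): $\gamma''=h(X,X)$ is normal, $\gamma'''=-A_{h(X,X)}X+(\overline{\nabla}_{X}h)(X,X)$ has tangential part $-A_{h(X,X)}X$ and normal part $(\overline{\nabla}_{X}h)(X,X)$, and $\gamma''''$ splits similarly.

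The key step is to exploit the linear dependence at the initial point. Write a relation $a\gamma'+b\gamma''+c\gamma'''+d\gamma''''=0$ at $p$ (with coefficients not all zero, depending on $X$), and separate it into its $T_pM_r$ and $T_p^{\perp}M_r$ components using (3.1)--(3.3). Alternatively, and more cleanly, I would use the criterion for pseudo-isotropy recalled in Definition~3.2: $M_r$ is pseudo-isotropic iff $\langle h(X,X),h(X,Y)\rangle=0$ for all orthonormal $X,Y$. So the plan is to differentiate $L(X(s))=\langle h(X,X),h(X,X)\rangle$ along a geodesic. Since $\gamma$ is a geodesic, $\nabla_X X=0$, and one computes $\frac{d}{ds}\langle h(X,X),h(X,X)\rangle = 2\langle D_X h(X,X),h(X,X)\rangle = 2\langle(\overline{\nabla}_X h)(X,X),h(X,X)\rangle = 2\langle \gamma''',\gamma''\rangle$ (the tangential part of $\gamma'''$ is orthogonal to the normal $\gamma''$). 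Thus showing $L$ is constant along every geodesic is equivalent to showing $\langle\gamma''',\gamma''\rangle\equiv 0$.

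Now I would bring in the $3$-planar condition to force $\langle\gamma''',\gamma''\rangle=0$. Consider the Gram-type determinant or, more directly, the function $\langle\gamma''\wedge\gamma'''\wedge\gamma''''\,,\cdots\rangle$; since $\gamma',\gamma'',\gamma''',\gamma''''$ lie in a space where only three are independent, and $\gamma'$ is transverse to the normal directions carrying $\gamma''$, one gets a scalar relation. The cleanest route: differentiate $\langle\gamma'',\gamma''\rangle$ twice. First derivative gives $2\langle\gamma''',\gamma''\rangle$; second derivative gives $2\langle\gamma'''',\gamma''\rangle+2\langle\gamma''',\gamma'''\rangle$. The $3$-planarity lets one write $\gamma''''$ as a linear combination of $\gamma',\gamma'',\gamma'''$ (after arranging the osculating order), and then tracking the normal components and using Theorem~3.2-type scalar invariance (the quantity in (3.6) is constant by Theorem in the excerpt — though that is for the pseudo-Euclidean ambient; here I would instead argue directly) one deduces a differential equation for $L$ forcing $L'=0$. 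The main obstacle I anticipate is precisely disentangling which linear combination the $3$-planar hypothesis yields at a \emph{non-initial} parameter value versus only at $p$: the definition gives linear dependence of $\gamma',\dots,\gamma^{(d+1)}$ as vectors in $W$ at every point, but $W=E(p,X)$ is fixed by the initial data, so one must argue that $\gamma$ stays in $W$ and propagate the dependence. Handling this propagation — i.e. showing the coefficients in $d\gamma''''+c\gamma'''+b\gamma''+a\gamma'=0$ can be chosen so that the normal part collapses to a multiple of $h(X,X)$, which then forces $\langle(\overline{\nabla}_X h)(X,X),h(X,X)\rangle=0$ — is the crux, and the rest is the routine computation $L'=2\langle(\overline{\nabla}_X h)(X,X),h(X,X)\rangle=0$ along every unit-speed geodesic, giving constancy of $L$ on $UM$.
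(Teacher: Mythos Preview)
The paper gives no self-contained argument here; its entire proof is the single line ``Similar to the proof of Lemma~4.1 in~\cite{Na}.'' So there is nothing detailed to compare against, but your plan has two genuine gaps that would prevent it from going through.

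First, the implication ``$3$-planar $\Rightarrow\langle\gamma''',\gamma''\rangle=0$'' is false in general. From (2.6) one has $\langle\gamma''',\gamma''\rangle=\varepsilon_{2}k_{1}k_{1}'$, so what you are asking for is that the first curvature $k_{1}$ be constant along $\gamma$. But a curve of osculating order at most $3$ may have non-constant $k_{1}$ and $k_{2}$; the linear dependence $\gamma''''\in\mathrm{span}\{\gamma',\gamma'',\gamma'''\}$ imposes no such restriction. Your own caveat about ``disentangling which linear combination the $3$-planar hypothesis yields'' is precisely the point at which this route breaks down.

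Second, even granting $L'=0$ along every geodesic, that only says $L$ is invariant under the geodesic flow on $UM$; it does not give constancy on $UM$. (On a flat manifold any function of the direction alone is flow-invariant without being constant.) One must first establish \emph{pointwise} pseudo-isotropy, so that $L$ descends to a function on $M_{r}$; only then does $X\!\cdot\!L=0$ for all $X$ force $L$ to be constant.

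The criterion from Definition~3.2 that you quote and then set aside is in fact the key. Because the normal section $\gamma$ lies in the totally geodesic slice $W$ with $T_{p}W=E(p,X)=\mathbb{R}X\oplus T_{p}^{\perp}M_{r}$, every derivative $\gamma^{(k)}(0)$ has its $T_{p}M_{r}$-component in $\mathbb{R}X$. Taking $k=3$ in (3.2) gives $A_{h(X,X)}X\in\mathbb{R}X$ for every unit $X$, hence
\[
\langle h(X,X),h(X,Y)\rangle=\langle A_{h(X,X)}X,Y\rangle=0\qquad(Y\perp X),
\]
which is exactly the pseudo-isotropy criterion. The $3$-planar hypothesis together with the fourth derivative (3.3) is then what one uses, in the spirit of Nakagawa's argument, to upgrade pointwise pseudo-isotropy to constancy of $L$.
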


\begin{proof}
Similar to the proof of Lemma 4.1 in \cite{Na}.
\end{proof}

\begin{proposition}
Let $f:M_{r}\rightarrow \mathbb{N}_{s}$ be an isometric immersion with $3$%
-planar geodesic normal sections then we have 
\begin{equation}
(\overline{\nabla }_{X}h)(X,X)=\varepsilon _{2}(Xk_{1})V_{2}+\varepsilon
_{2}\varepsilon _{3}k_{1}k_{2}V_{3},  \tag{3.7}  \label{C7}
\end{equation}%
\begin{equation}
A_{h(X,X)}X=\varepsilon _{1}\varepsilon _{2}k_{1}^{2}X.  \tag{3.8}
\label{C8}
\end{equation}
\end{proposition}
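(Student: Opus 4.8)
The plan is to describe a geodesic normal section $\gamma$ of $M_r$ (with $\gamma'(s)=X(s)$) in two ways — through its Frenet apparatus as a curve in $\mathbb{N}_s$, and through the tangential/normal splitting of $\gamma''$, $\gamma'''$ recorded in Proposition 3.1 — and then to match the two expressions term by term. Fix $p\in M_r$ and a unit $X\in T_pM_r$, and let $\gamma$ be the corresponding normal section. Because $M_r$ has $3$-planar geodesic normal sections, $\gamma',\gamma'',\gamma''',\gamma''''$ are linearly dependent, so $\gamma$ is a Frenet curve in $\mathbb{N}_s$ of osculating order $d\le 3$; write $\{V_1=X,V_2,V_3\}$ for its Frenet frame and $k_1,k_2$ for its curvature functions, with the convention that a term carrying a vanishing curvature is simply absent. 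This convention absorbs the degenerate cases $d=1$ (where $k_1=0$) and $d=2$ (where $k_2=0$), so no separate discussion of them is needed.

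The first step is the identity $h(X,X)=\varepsilon_2 k_1 V_2$. Since $\gamma$ is a geodesic of $M_r$ we have $\nabla_X X=0$, so (1.1) together with (3.1) gives $\gamma''=\widetilde{\nabla}_X X=h(X,X)$, while the first Frenet equation (2.1), read in $\mathbb{N}_s$, gives $\widetilde{\nabla}_X X=\varepsilon_2 k_1 V_2$; comparison yields the identity (and incidentally shows $V_2$ is $M_r$-normal when $k_1\neq 0$). In particular $\langle h(X,X),h(X,X)\rangle=\varepsilon_2 k_1^2$. I would then deduce (3.8): by the preceding Lemma $f$ is (constant) pseudo-isotropic, so $\langle h(X,X),h(X,Y)\rangle=0$ whenever $Y\perp X$; by (1.3) this reads $\langle A_{h(X,X)}X,Y\rangle=0$ for all such $Y$, so $A_{h(X,X)}X$ is a multiple of $X$, namely $A_{h(X,X)}X=\varepsilon_1\langle A_{h(X,X)}X,X\rangle X=\varepsilon_1\langle h(X,X),h(X,X)\rangle X=\varepsilon_1\varepsilon_2 k_1^2 X$.

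For (3.7) I would differentiate $\gamma''=\varepsilon_2 k_1 V_2$ along $\gamma$ and use the second Frenet equation (2.2), which gives $\gamma'''=-\varepsilon_1\varepsilon_2 k_1^2 X+\varepsilon_2(Xk_1)V_2+\varepsilon_2\varepsilon_3 k_1 k_2 V_3$ (this is essentially (2.6)). Comparing with (3.2), $\gamma'''=-A_{h(X,X)}X+(\overline{\nabla}_X h)(X,X)$, and using (3.8) to identify the tangential term $-A_{h(X,X)}X=-\varepsilon_1\varepsilon_2 k_1^2 X$, the $X$-components cancel and one is left with $(\overline{\nabla}_X h)(X,X)=\varepsilon_2(Xk_1)V_2+\varepsilon_2\varepsilon_3 k_1 k_2 V_3$. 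The only non-bookkeeping step is the appeal to pseudo-isotropy, which forces $A_{h(X,X)}X$ to be parallel to $X$; once this tangential term is pinned down, the $\mathbb{N}_s$-Frenet expansion of $\gamma'''$ and the tangential/normal expansion of Proposition 3.1 are forced to agree, so one need not separately verify that $V_3$ is $M_r$-normal. The main thing to watch is therefore consistency of conventions: the unit-speed parametrization, the (local) constancy of the signs $\varepsilon_i$ along the connected curve $\gamma$, and the vanishing-curvature convention that covers low osculating order.
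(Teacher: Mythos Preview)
Your argument is correct and follows essentially the same line as the paper: identify $h(X,X)=\varepsilon_2 k_1 V_2$ from $\gamma''$, expand $\gamma'''$ via the Frenet equation (2.2), and compare with (3.2) to separate the tangential and normal parts. The only difference is how the tangential piece $A_{h(X,X)}X$ is pinned down: you first invoke the preceding Lemma (constant pseudo-isotropy) to force $A_{h(X,X)}X\parallel X$ and deduce (3.8), then use (3.8) to cancel the $X$-term and obtain (3.7); the paper instead asserts directly that $V_3$ is normal to $M_r$ (so that (3.11) is already split into an $X$-part and an $M_r$-normal part) and reads off (3.7) and (3.8) simultaneously by comparison with (3.2). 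Your route makes explicit the reason the tangential part reduces to a multiple of $X$, which the paper leaves implicit; the paper's route is marginally shorter but relies on the unstated fact that along a normal section the Frenet vector $V_3$ has no $M_r$-tangential component other than along $X$.
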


\proof%
Let $\gamma $ be a normal section of $M_{r}$ at point $p=\gamma (s)$ in the
direction of $X$. We suppose that $k_{1}(s)$ is positive. Then $k_{1}$ is
also smooth and there exists a unit vector field $V_{2}$ along $\gamma $
normal to $M_{r}$ such that 
\begin{equation}
h(X,X)=\left \langle V_{2},V_{2}\right \rangle k_{1}V_{2}.  \tag{3.9}
\label{C9}
\end{equation}

Since $\overline{\nabla }_{X}V_{2}$ is also tangent to $M_{r}$, there exists
a vector field $V_{3}$ normal to $M_{r}$ and mutually ortogonal to $X$ and $%
V_{2}$ such that 
\begin{equation}
\widetilde{\nabla }_{X}V_{2}=-\left \langle X,X\right \rangle k_{1}X+\left
\langle V_{3},V_{3}\right \rangle k_{2}V_{3}.  \tag{3.10}  \label{C10}
\end{equation}

Differentiating (3.9) covariantly and using (3.10) we get 
\begin{equation}
(\overline{\nabla }_{X}h)(X,X)=-\varepsilon _{1}\varepsilon
_{2}k_{1}^{2}X+\varepsilon _{2}(Xk_{1})V_{2}+\varepsilon _{2}\varepsilon
_{3}k_{1}k_{2}V_{3},  \tag{3.11}  \label{C11}
\end{equation}%
where $\left \langle V_{i},V_{i}\right \rangle =\varepsilon _{i}=\pm 1.$
Comparing (3.11) with (3.2) we get the result.

\begin{proposition}
Let $\gamma $ be a normal section of $M_{r}$ at point $p=\gamma (s)$ in the
direction of $X.$ $\gamma $ is a non-null $W$-curve of rank $2$ in $M_{r}$
if and only if 
\begin{equation}
\nabla _{X}\nabla _{X}X+g(\nabla _{X}X,\nabla _{X}X)g(X,X)X=0.  \tag{3.12}
\label{C12}
\end{equation}
\end{proposition}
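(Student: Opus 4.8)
The plan is to work directly with the Frenet equations (2.1)–(2.2) and the characterization of $W$-curves of rank $2$ from Definition 2.2 and Proposition 2.3, translating everything into intrinsic covariant derivatives of $X=\gamma'$. Since $\gamma$ is a geodesic normal section, $\nabla_X X = 0$ along $\gamma$ as a curve in $M_r$ — wait, that is not the intended reading: here $\nabla$ is the connection of the ambient-derivative decomposition only insofar as $\gamma$ is viewed inside $M_r$. Actually the cleanest route: $\gamma$ being a geodesic normal section means $\gamma$ is a geodesic of $M_r$, so its intrinsic acceleration vanishes; but condition (3.12) involves $\nabla_X\nabla_X X$ which would then be trivially zero. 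So the correct interpretation — consistent with Proposition 3.1, where $\gamma''(s)=h(X,X)$ is the ambient acceleration — is that $\nabla$ in (3.12) denotes the Levi-Civita connection of the \emph{ambient} manifold restricted along $\gamma$ (equivalently, $\nabla_X X = \gamma''$, $\nabla_X\nabla_X X=\gamma'''$, etc.), and $g$ is the ambient metric. Under that reading, first I would rewrite (3.12) as $\gamma''' + \langle \gamma'',\gamma''\rangle\,\langle\gamma',\gamma'\rangle\,\gamma' = 0$.

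Next I would compute the two inner products in terms of curvatures. From (2.1), $\gamma'' = \nabla_X X = \varepsilon_2 k_1 V_2$, hence $\langle\gamma'',\gamma''\rangle = \varepsilon_2^2 k_1^2 \langle V_2,V_2\rangle = \varepsilon_2 k_1^2$; also $\langle\gamma',\gamma'\rangle = \varepsilon_1$. So the scalar coefficient in (3.12) equals $\varepsilon_1\varepsilon_2 k_1^2$, and (3.12) becomes
\[
\gamma'''(s) = -\varepsilon_1\varepsilon_2 k_1^2\,\gamma'(s).
\]
This is exactly equation (2.5) of Proposition 2.3. Therefore the proof reduces to showing: a non-null normal section $\gamma$ is a $W$-curve of rank $2$ if and only if $\gamma''' = -\varepsilon_1\varepsilon_2 k_1^2\gamma'$.

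The forward direction is Proposition 2.3 verbatim: if $\gamma$ is a $W$-curve of rank $2$, then $k_1$ is constant and $k_2=0$, and (2.6) collapses to (2.5). For the converse, I would start from the general formula (2.6),
\[
\gamma'''(s) = -\varepsilon_1\varepsilon_2 k_1^2 X + \varepsilon_2 k_1' V_2 + \varepsilon_2\varepsilon_3 k_1 k_2 V_3,
\]
and impose $\gamma''' = -\varepsilon_1\varepsilon_2 k_1^2 X$. Subtracting, we get $\varepsilon_2 k_1' V_2 + \varepsilon_2\varepsilon_3 k_1 k_2 V_3 = 0$. Since $\{V_2,V_3\}$ are linearly independent (they are part of the Frenet frame of a curve of osculating order $\ge 3$; if the osculating order is exactly $2$ the $V_3$-term is simply absent) and $\varepsilon_2=\pm1$, this forces $k_1' = 0$ and $k_1 k_2 = 0$. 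As $\gamma$ is non-null with $k_1>0$ (the standing assumption, so that $V_2$ is defined), we conclude $k_1$ is a nonzero constant and $k_2 = 0$, i.e. $\gamma$ is a $W$-curve of rank $2$ by Definition 2.2.

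The main obstacle — really the only subtlety — is the bookkeeping of causal characters $\varepsilon_i$ and making sure the identification $\langle\gamma'',\gamma''\rangle = \varepsilon_2 k_1^2$ is used consistently when passing between (3.12), (2.5) and (2.6); one must also handle the degenerate sub-case where $\gamma$ has osculating order exactly $2$ (so $V_3$ does not exist), in which case the $k_1 k_2$ term never appears and the argument is even shorter. No new machinery beyond Propositions 2.1 and 2.3 is needed.
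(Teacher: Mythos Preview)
Your computational core is exactly the paper's: identify $g(X,X)=\varepsilon_1$ and $g(\nabla_X X,\nabla_X X)=\varepsilon_2 k_1^2$ via $\gamma''=\varepsilon_2 k_1 V_2$, so that (3.12) becomes $\gamma'''=-\varepsilon_1\varepsilon_2 k_1^2\gamma'$, and then appeal to (2.5)/(2.6). You even spell out the converse from (2.6), which the paper leaves implicit.

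The one slip is in the set-up, not the mathematics. The proposition says ``normal section'', not ``geodesic normal section'', and it asks for the $W$-curve property \emph{in $M_r$}. Hence $\nabla$ is the Levi--Civita connection of $M_r$ (as throughout the paper; the ambient one is $\widetilde{\nabla}$), and there is no reason for $\nabla_X X$ to vanish. Your detour---concluding that $\nabla$ must be the ambient connection because otherwise (3.12) would be trivial---rests on that misreading. Drop it: with $\nabla$ intrinsic, $\gamma''=\nabla_X X$, $\gamma'''=\nabla_X\nabla_X X$, the Frenet apparatus of Proposition~2.1 is that of $M_r$, and your argument goes through verbatim and matches the paper's proof.
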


\begin{proof}
Since $\gamma ^{\prime }(s)=X(s),$ $\gamma ^{^{\prime \prime }}(s)=\nabla
_{X}\nabla _{X}X$ and 
\begin{equation*}
g(X,X)=\varepsilon _{1},g(\nabla _{X}X,\nabla _{X}X)=\varepsilon
_{2}k_{1}^{2}.
\end{equation*}%
So, by the use of the equality $\gamma ^{^{\prime \prime }}(s)=\varepsilon
_{2}k_{1}V_{2}(s)$ we get the result.
\end{proof}

\begin{proposition}
Let $M_{r}$ be a totally umbilical submanifold of $\mathbb{N}_{s}$ with
parallel mean curvature vector field . If the normal section $\gamma $ is a
W- curve of osculating order 2. Then $\gamma $ is also a W-curve of $\mathbb{%
N}_{s}$ with the same order.
\end{proposition}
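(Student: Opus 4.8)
The plan is to compute the first three derivatives of $\gamma$, regarded as a curve of $\mathbb{N}_{s}$, by iterating the Gauss formula (1.1) and the Weingarten formula (1.2), show that $\gamma ^{\prime \prime \prime }$ is then a scalar multiple of $\gamma ^{\prime }$, and deduce from (the converse reading of) the general identity (2.6), applied to $\gamma$ inside $\mathbb{N}_{s}$, that its first curvature in $\mathbb{N}_{s}$ is constant and its second curvature vanishes. First I would unpack the two hypotheses. Total umbilicity together with (1.8) and (1.3) gives $A_{\xi }Y=\left\langle H,\xi \right\rangle Y$ for every normal $\xi$, in particular $A_{H}X=\left\langle H,H\right\rangle X$; and parallelism of $H$ means $D_{X}H=0$, so by (1.2) $\widetilde{\nabla }_{X}H=-A_{H}X=-\left\langle H,H\right\rangle X$, whence $X\left\langle H,H\right\rangle =2\left\langle \widetilde{\nabla }_{X}H,H\right\rangle =0$ and $\left\langle H,H\right\rangle$ is constant along $\gamma$. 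On the other side, $\gamma$ being a $W$-curve of osculating order $2$ in $M_{r}$ means its intrinsic Frenet frame reduces to $\{V_{1}=X,V_{2}\}$ with $\nabla _{X}X=\varepsilon _{2}k_{1}V_{2}$ and $\nabla _{X}V_{2}=-\varepsilon _{1}k_{1}X$, where $k_{1}$ is a nonzero constant and $\left\langle X,V_{2}\right\rangle =0$.

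Next I would differentiate. By (1.1), $\gamma ^{\prime \prime }=\widetilde{\nabla }_{X}X=\nabla _{X}X+h(X,X)=\varepsilon _{2}k_{1}V_{2}+\varepsilon _{1}H$; its tangential part $\varepsilon _{2}k_{1}V_{2}$ is nonzero and orthogonal to $X$, so $\gamma ^{\prime }$ and $\gamma ^{\prime \prime }$ are linearly independent and $\left\langle \gamma ^{\prime \prime },\gamma ^{\prime \prime }\right\rangle =\varepsilon _{2}k_{1}^{2}+\left\langle H,H\right\rangle$ is constant. Differentiating once more, using the Gauss formula on the tangent field $V_{2}$ with $h(X,V_{2})=\left\langle X,V_{2}\right\rangle H=0$, and using $\widetilde{\nabla }_{X}H=-\left\langle H,H\right\rangle X$, I expect to obtain $\gamma ^{\prime \prime \prime }=\varepsilon _{2}k_{1}(-\varepsilon _{1}k_{1}X)-\varepsilon _{1}\left\langle H,H\right\rangle X=-\varepsilon _{1}(\varepsilon _{2}k_{1}^{2}+\left\langle H,H\right\rangle )\gamma ^{\prime }$.

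From here the conclusion is short. Since $\gamma ^{\prime },\gamma ^{\prime \prime }$ are linearly independent while $\gamma ^{\prime \prime \prime }\in \mathrm{span}\{\gamma ^{\prime }\}$, the curve $\gamma$ has osculating order $2$ in $\mathbb{N}_{s}$; its first curvature there satisfies $\widetilde{k}_{1}^{\,2}=|\left\langle \gamma ^{\prime \prime },\gamma ^{\prime \prime }\right\rangle |=|\varepsilon _{2}k_{1}^{2}+\left\langle H,H\right\rangle |$, which is constant, and comparing the displayed formula for $\gamma ^{\prime \prime \prime }$ with the general identity (2.6) written for $\gamma$ in $\mathbb{N}_{s}$ forces the coefficients of $\widetilde{V}_{2}$ and $\widetilde{V}_{3}$ there to vanish, i.e. $\widetilde{k}_{1}^{\prime }=0$ and $\widetilde{k}_{2}=0$. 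Hence $\gamma$ is a $W$-curve of $\mathbb{N}_{s}$ of the same order $2$. I do not anticipate a serious obstacle: the only care-points are keeping the signs $\varepsilon _{i}=\pm 1$ straight, using that the Gauss and Weingarten identities carry over to fields defined only along $\gamma$, and — in the genuinely indefinite setting — tacitly assuming $\varepsilon _{2}k_{1}^{2}+\left\langle H,H\right\rangle \neq 0$ so that $\gamma$ is non-null in $\mathbb{N}_{s}$ and the Frenet frame there exists. Conceptually, the entire mechanism is that ``totally umbilical $+$ parallel $H$'' makes $\widetilde{\nabla }_{X}H$ proportional to $X$, which is exactly what annihilates the part of $\gamma ^{\prime \prime \prime }$ transverse to $\gamma ^{\prime }$.
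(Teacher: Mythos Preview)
Your proof is correct and follows essentially the same route as the paper: compute $\gamma''$ and $\gamma'''$ in $\mathbb{N}_{s}$ via the Gauss and Weingarten formulas, use total umbilicity to replace $h(X,X)$ by $\varepsilon_{1}H$ and parallelism of $H$ to get $\widetilde{\nabla}_{X}H=-\langle H,H\rangle X$, and conclude that $\gamma'''$ is a scalar multiple of $\gamma'$. The only cosmetic difference is that the paper packages the final step through the characterization (3.12) of the immediately preceding proposition (showing $\widetilde{\nabla}_{X}\widetilde{\nabla}_{X}X+g(\widetilde{\nabla}_{X}X,\widetilde{\nabla}_{X}X)g(X,X)X=0$ directly), whereas you work in the intrinsic Frenet frame of $\gamma$ in $M_{r}$ and invoke (2.6); these are equivalent formulations of the same computation.
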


\begin{proof}
Suppose $\gamma $ is a W-curve of rank $2$ in $M_{r}$ then it satisfies the
equality (3.12). Further, by the use of (1.1) we get 
\begin{equation}
\gamma ^{^{\prime \prime }}=\widetilde{\nabla }_{X}X=\nabla _{X}X+h(X,X). 
\tag{3.13}  \label{C13}
\end{equation}%
Since $M_{r}$ is totally umbilical then $g(X,X)H=h(X,X).$ So, the equation
(3.13) reduces to 
\begin{equation}
\gamma ^{^{\prime \prime }}=\widetilde{\nabla }_{X}X=\nabla _{X}X+g(X,X)H. 
\tag{3.14}  \label{C14}
\end{equation}%
Differentiating the equation (3.14) with respect to X we obtain%
\begin{eqnarray}
\gamma ^{^{\prime \prime \prime }} &=&\widetilde{\nabla }_{X}\widetilde{%
\nabla }_{X}X=\nabla _{X}\nabla _{X}X+g(X,\nabla _{X}X)H  \TCItag{3.15}
\label{C15} \\
&&+g(X,X)(-A_{H}X+D_{X}H).  \notag
\end{eqnarray}%
Further, taking use of $DH=0$ and (3.13)-(3.15) get 
\begin{eqnarray*}
&&\widetilde{\nabla }_{X}\widetilde{\nabla }_{X}X+g(\widetilde{\nabla }_{X}X,%
\widetilde{\nabla }_{X}X)g(X,X)X \\
&=&\nabla _{X}\nabla _{X}X-g(H,H)g(X,X)X+\left \{ g(\nabla _{X}X,\nabla
_{X}X)g(X,X)\right \} g(X,X)X \\
&=&\nabla _{X}\nabla _{X}X+g(\nabla _{X}X,\nabla _{X}X)g(X,X)X.
\end{eqnarray*}%
So, by previous proposition $\gamma $ is a W-curve of rank $2$ in $\mathbb{N}%
_{s}.$
\end{proof}

\emph{Kadri Arslan \& Bet\"{u}l BULCA}

\emph{Uludag University}

\emph{Faculty of Art and Sciences}

\emph{Department of Mathematics}

\emph{16059, Bursa, TURKEY.}

\emph{arslan@uludag.edu.tr}

\emph{bbulca@uludag.edu.tr}

\bigskip

\emph{G\"{u}nay \"{O}ZT\"{U}RK }

\emph{Kocaeli University}

\emph{Faculty of Art and Sciences}

\emph{Department of Mathematics}

\emph{41310, Kocaeli, TURKEY.}

\emph{ogunay@kocaeli.edu.tr}

\end{document}